\newtheorem{theorem}{Theorem}[section]
\newtheorem{lemma}[theorem]{Lemma}
\newtheorem{corollary}[theorem]{Corollary}
\theoremstyle{definition}
\newtheorem{example}[theorem]{Example}
\newtheorem{question}[theorem]{Question}
\newcommand{\spl}{\mathrm{Split}}
\newcommand{\U}{\mathcal U}
\newcommand{\w}{\omega}
\newcommand{\IQ}{\mathbb Q}
\newcommand{\IP}{\mathbb P}
\newcommand{\A}{\mathcal{A}}
\newcommand{\F}{\mathcal{F}}
\newcommand{\V}{\mathcal{V}}
\newcommand{\vid}{\hat{\ \ }}
\newcommand{\bigvid}{\hat{\ \ }}
\newcommand{\uhr}{\upharpoonright}
\newcommand{\name}[1]{\dot{#1}}
\newcommand{\la}{\langle}
\newcommand{\ra}{\rangle}
\newcommand{\bM}{\mathcal M}
\newcommand{\forces}{\Vdash}
\newcommand{\hot}{\mathfrak}
\newcommand{\nothing}[1]{}
\newcommand{\zrost}{\w^{\uparrow\w}}
\title[Well-splitting posets]{On well-splitting posets}
\author[D. Repov\v{s}  and L. Zdomskyy]{Du\v{s}an Repov\v{s}  and Lyubomyr Zdomskyy}
\address{Faculty of Education, and Faculty of Mathematics and Physics,
University of Ljubljana, and Institute of Mathematics, Physics and Mechanics, Ljubljana, Slovenia 1000.}
\email{dusan.repovs@guest.arnes.si}
\urladdr{http://www.fmf.uni-lj.si/\~{}repovs/index.htm}
\address{Universit\"at Wien, Institut f\"ur Mathematik, Kurt G\"odel Research Center, Augasse 2--6, UZA 1 - Building 2, 1090 Wien, Austria.}
\email{lzdomsky@gmail.com}
\urladdr{http://www.logic.univie.ac.at/\~{}lzdomsky/}
\subjclass[2010]{Primary: 03E35, 03E17; Secondary: 54D20.}
\keywords{Splitting, bounding,
 Miller forcing, filter,  Hurewicz space,  mad family.}
\thanks{The first author was partially supported by
the Slovenian Research Agency grants P1-0292, N1-0114, and N1-0083.
The second author would
like to thank  the Austrian Science Fund FWF (Grants I2374-N35 and I3709-N35)
 for generous support for this research. \\
 }
\begin{document}
\begin{abstract}
We introduce a class of proper posets which
 is preserved under countable support iterations,  includes $\w^\w$-bounding, Cohen, Miller, and
Mathias posets associated to filters with the Hurewicz covering properties, and has the property
 that the ground model reals remain splitting and unbounded
in corresponding extensions.
Our results  may be considered as a possible
path towards solving  variations of the famous Roitman problem.
\end{abstract}

\maketitle

\section{Introduction}

The famous  Roitman problem asks whether it is consistent that $\hot d=\w_1<\hot a$.
Here, $\hot d$ is the minimal cardinality of a subfamily of $\w^\w$ which is \emph{dominating} with respect to
the preorder relation $\leq^*$ on $\w^\w$, where
$a\leq^*b$ for $a,b\in\w^\w$
 means that $a(n) \leq b(n)$
for all but finitely many $n$; and
$\hot a$ is the minimal cardinality of an infinite \emph{mad subfamily $\mathcal A$} of $[\w]^\w$, i.e.,
an infinite subfamily whose distinct elements have finite intersection and which is maximal with respect to this property.

Without  the restriction $\hot d=\w_1$, the consistency of $\hot d<\hot a$ has been established in a breakthrough work
of Shelah \cite{She_cup}. Regarding the original Roitmann problem, even the following weaker version thereof raised in \cite{Bre17}  remains open:
Is it consistent that $\hot s=\hot b=\w_1<\hot a$?  Here, $\hot s$ is the minimal cardinality of a \emph{splitting} family, i.e., a family $\mathcal S\subset [\w]^\w$ such that
for every $X\in [\w]^\w$ there exists $S\in\mathcal S$ for which both $S\cap X$ and $X\setminus S$ are infinite; and
$\hot b$ is the minimal cardinality of a subfamily of $\w^\w$ which is \emph{unbounded} with respect to $\leq^*$. It is well-known that
$\max\{\hot b,\hot s\}\leq\hot d$ and the strict inequality holds, e.g., in the Cohen model (see
 \cite{Bla10, Vau90} for more information on these and many other cardinal characteristics of the continuum).

In this paper we isolate the class of \emph{well-splitting}  posets (see the next section for the definition) with properties described in the abstract, aiming at the solution of the aforementioned weak version of Roitman's problem. This class includes among others Mathias posets associated to filters on $\w$ with the Hurewicz covering property. This motivates the following
\begin{question} \label{Qu}
(CH) Can every mad family be destroyed by a well-splitting poset? In particular, given a mad family $\mathcal A$, is there a well-splitting poset
$\IP$ such that in $V^{\IP}$, $\{\w\setminus A:A\in\mathcal A\}$ can be enlarged to a Hurewicz filter, or more generally to a filter, whose Mathias forcing is well-splitting?
\end{question}
By our Theorem~\ref{main} proved in the next section, the affirmative answer to Question~\ref{Qu} would allow one to construct a model of
$\hot b=\hot s=\w_1<\hot a =\w_2$.

Recall from \cite{Hur27} that a topological space
$X$ is said to have the \emph{Hurewicz covering  property} (or is simply called Hurewicz space) if for every sequence $\la \U_n : n\in\omega\ra$
of open covers of $X$ there exists a sequence $\la \V_n : n\in\omega \ra$ such that
each $\V_n$ is a finite subfamily of $\U_n$ and the collection $\{\cup \V_n:n\in\omega\}$
is a $\gamma$-cover of $X$, i.e.,  the set $\{n\in\w:x\not\in\cup\V_n\}$ is finite for each $x\in X$. It is clear that
$\sigma$-compact spaces are Hurewicz, but by
 \cite[Theorem~5.1]{COC2} there  also exist non-$\sigma$-compact sets of reals  having the Hurewicz property. We consider  each filter on $\w$ with the subspace topology inherited from $\mathcal P(\w)$, the latter being a topological copy of the Cantor space $2^\w$ via characteristic functions.
  As it was proved in \cite{ChoRepZdo15}, the Mathias forcing associated to a filter $\F $ is almost
 $\w^\w$-bounding in terminology of  \cite{She84} if and only if $\F$ is Hurewicz. It is worth mentioning here that in general, almost $\w^\w$-bounding posets can
 make ground model reals non-splitting, see, e.g., \cite[Lemma~1.14]{She84}, so by our
 Lemma~\ref{obs_well_spl} almost $\w^\w$-bounding posets do not have to be well-splitting.

 Building on the proof of \cite[Theorem~3.1]{Bre98},  it was established in \cite{Zdo??} that under CH, for every
 mad family $\A$, the collection  $\{\w\setminus A:A\in\mathcal A\}$ can be enlarged to an ultrafilter  $\F$ with a certain covering property
 which is
 weaker (but similar) to the Hurewicz one, and whose Mathias forcing does not produce any new real dominating the given ground model unbounded subset.
The construction in \cite{Zdo??} cannot be directly used to answer Question~\ref{Qu} since
by
Lemma~\ref{obs_well_spl}
the Mathias forcing for ultrafilters cannot be well-splitting because it adds an unsplit real. However, it is natural to ask how far can one weaken the
Hurewicz property of a filter  so that its Mathias forcing is still well-splitting.

\begin{question}
Let $\F$ be a filter on $\w$ whose Mathias forcing is well-splitting. Is then $\F$ Hurewicz? In other words,
are  well-splitting and almost $\w^\w$-bounding equivalent for such posets?
\end{question}

\section{Well-splitting posets}

Throughout this section we denote by $E_0$ and $E_1$ the sets
of all  even  and odd natural numbers, respectively.
A strictly increasing function $f\in\w^\w$ is said to \emph{well-split} a set $M$ if
the sets $\{n\in E_j: |[f(n), f(n+1))\cap Y|\geq 2\}$ are infinite
 for all $Y\in M\cap [\w]^\w$ and $j\in 2$. A simple diagonalization argument
 shows that
 for every countable $M$ there is a function
 well-splitting it.

We shall say that a poset $\IP$ is \emph{well-splitting} if the following is satisfied: Whenever
$\IP \in M$, where $M$ is a countable elementary submodel of $H(\theta)$ for any
sufficiently large $\theta$, $p\in M\cap P$ and $f$ well-splits $M$,
 then
there is some $q\leq p$
which is $(M, \IP)$-generic and such that $q$ forces
$f$ to well-split $M[\Gamma]$, where $\Gamma$ is the canonical name for $\IP$-generic filter.

\begin{lemma} \label{obs_well_spl}
Supose that $\IP$ is well-splitting and $G$ is $\IP$-generic.
 Then $V\cap [\w]^\w$ is splitting and $V\cap\w^\w$ is unbounded in $V[G]$.
\end{lemma}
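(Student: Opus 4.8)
The plan is to extract from the well-splitting hypothesis the two conclusions separately, using a single countable elementary submodel $M\ni\IP$ and a function $f$ that well-splits $M$, whose existence is guaranteed by the diagonalization remark preceding the lemma.

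First I would handle unboundedness. Fix a $\IP$-name $\name{g}$ for an element of $\w^\w$ and a condition $p$; I want to show $p$ does not force $\name{g}$ to dominate all ground model reals, i.e.\ that the ground model reals stay unbounded. Choose $M\prec H(\theta)$ countable with $\IP,\name{g},p\in M$, and pick $f$ well-splitting $M$. By well-splitting, there is $q\le p$ which is $(M,\IP)$-generic and forces $f$ to well-split $M[\Gamma]$; since $\name{g}\in M$, the interpretation of $\name{g}$ lies in $M[\Gamma]$, and $q$ forces $f$ to well-split it in the sense of the definition applied to $M[\Gamma]$. Here the key point is that if $f$ well-splits a model $N$ then $f$ itself (a ground model real, strictly increasing) cannot be dominated by any $Y\in N\cap[\w]^\w$ in a strong sense: for each infinite $Y\in N$ the set $\{n : |[f(n),f(n+1))\cap Y|\ge 2\}$ is infinite, which in particular forces $Y$ to be unbounded relative to $f$. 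Applying this with $Y=\range(g)$ (which is infinite since $g$ is a name for a function into $\w$, and if $\range(g)$ is finite then $g$ is trivially dominated by a ground model real anyway) yields that $q$ forces $g$ not to dominate $f$. Hence $f\in V\cap\w^\w$ witnesses that $V\cap\w^\w$ is unbounded in $V[G]$.

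Next I would handle splitting. Fix a $\IP$-name $\name{X}$ for an element of $[\w]^\w$ and a condition $p$; I must find $S\in V\cap[\w]^\w$ such that some $q\le p$ forces $S\cap\name{X}$ and $\name{X}\sm S$ to both be infinite. As before take $M\prec H(\theta)$ countable with $\IP,\name{X},p\in M$, pick $f$ well-splitting $M$, and let $q\le p$ be $(M,\IP)$-generic forcing $f$ to well-split $M[\Gamma]$. Let $S:=\bigcup_{n\in E_0}[f(n),f(n+1))$, the ground model set obtained by taking the even-indexed blocks of $f$; then $\w\sm S=\bigcup_{n\in E_1}[f(n),f(n+1))$ up to the first block, so it suffices to control both the even and odd blocks. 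Since $q$ forces $f$ to well-split $M[\Gamma]\ni \name{X}$, it forces that for each $j\in 2$ the set $\{n\in E_j:|[f(n),f(n+1))\cap X|\ge 2\}$ is infinite; in particular for infinitely many even $n$ the block $[f(n),f(n+1))$ meets $X$, so $q$ forces $S\cap\name{X}$ infinite, and likewise (using the odd blocks) $q$ forces $\name{X}\sm S\supseteq\bigcup_{n\in E_1}([f(n),f(n+1))\cap X)$ infinite. Thus $S$ splits $\name{X}$, and since $\name{X},p$ were arbitrary, $V\cap[\w]^\w$ is splitting in $V[G]$.

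The only real obstacle is the bookkeeping in the first part: one must phrase ``$f$ well-splits $\name{g}$ hence $g\not\ge^* f$'' carefully, since the definition of well-splitting a set speaks of blocks containing at least two points of $Y$ rather than directly about domination. The clean way is to observe that $|[f(n),f(n+1))\cap\range(g)|\ge 2$ for infinitely many $n$ forces $\range(g)$ to contain, for infinitely many $n$, a point $\ge f(n)$, whence $\sup_n g(n)=\w$ does not suffice --- instead one should note that for each $m$, the value $f(m)$ is exceeded by some element of $\range(g)$ beyond any given bound, so $g$ takes values above $f(k)$ for infinitely many $k$; choosing $f$ strictly increasing and comparing indices gives $f\not\le^* g$ after a harmless reindexing. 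This is routine once set up, and everything else is a direct unwinding of the definition of a well-splitting poset applied to a suitable $M$, $p$, and $f$.
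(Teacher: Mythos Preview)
Your splitting argument is correct and is exactly the paper's: the set
$S=\bigcup_{n\in E_0}[f(n),f(n+1))$ and its complement both meet $\name{X}$
infinitely often because $q$ forces $f$ to well-split $M[\Gamma]\ni\name{X}$.

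The unboundedness argument, however, has a real gap. You apply the
well-splitting of $f$ only to $Y=\range(g)$ and then assert that
``comparing indices gives $f\not\le^* g$ after a harmless reindexing''.
This step is not routine; in fact it is false as stated. Take
$f(n)=2^n$ and the strictly increasing $g$ given by
$g(2i)=2^{3i}$, $g(2i+1)=2^{3i}+1$. Then for every $i$ the interval
$[f(3i),f(3i+1))=[2^{3i},2^{3i+1})$ contains the two points
$g(2i),g(2i+1)$ of $\range(g)$, so $f$ well-splits $\{\range(g)\}$ in
both parities, yet $g(k)\ge f(k)$ for all $k$, i.e.\ $f\le^* g$. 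Thus
knowing only that $f$ well-splits $\range(g)$ cannot yield
$f\not\le^* g$; the index of the $g$-value landing in
$[f(n),f(n+1))$ may be much smaller than $n$, and no ``reindexing''
repairs this.

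What is missing is the use of a second set in $M[\Gamma]$, namely the
range of the iterated function $h_1$ defined (as a name in $M$) by
$h_1(0)=1$ and $h_1(m+1)=g(h_1(m))+1$. If two \emph{consecutive}
values $h_1(m),h_1(m+1)$ fall into $[f(n),f(n+1))$, then
$g(h_1(m))<h_1(m+1)<f(n+1)$, while $h_1(m)\ge f(n)\ge n+1$ (for $n$
beyond the first such block), so $f(n+1)\le f(h_1(m))$ and hence
$g(k)<f(k)$ at $k=h_1(m)$. Since $f$ well-splits $M[\Gamma]$ and
$\range(h_1)\in M[\Gamma]$, this happens for infinitely many $n$,
giving infinitely many $k$ with $g(k)<f(k)$. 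This auxiliary
construction is the actual content of the unboundedness half; without
it your plan does not go through.
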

\begin{proof}
To see that $\w^\w\cap V$ is unbounded, let us fix a $\IP$-name
$\name{h}$ for an element of $\zrost$ (the family of all strictly
increasing functions in $\w^\w$),   $p\in\IP$, and
pick a countable elementary submodel
$M$ of $H(\theta)$  such that $\IP,\name{h},p\in M$.
Suppose that  $f$ well-splits $M$ and  $q\leq p$ is any  $(M,
\IP)$-generic condition which forces    $f$ to well-split
$M[\Gamma]$. Let $\name{h}_1\in M$ be a $\IP$-name for the following
function: $\name{h}_1(0)=1$,
$\name{h}_1(n+1)=\name{h}(\name{h}_1(n))+1$ for all $n\in\w$.
It
follows from the above that  $q$ forces the set
$$ \name{I}:=\{n\in E_0: |[f(n),f(n+1))\cap\mathrm{range}(\name{h}_1)|\geq 2\} $$
to be infinite. Let $G\ni q$ be $\IP$-generic and set
$I=\name{I}^G$, $h=\name{h}^G$, and $h_1=\name{h}_1^G$.
Let us note that
$[f(\min I),f(\min I+1))\cap\mathrm{range}(h_1)|\geq 2$ yields
$$f(\min I+1)\geq f(\min I)+2\geq\min I+2=(\min I+1)+1,$$
and therefore  $f(i)\geq i+1$ for every $i>\min I$, because $f$ is strictly increasing.

In $V[G]$,
for every $i\in I\setminus\{\min I\}$ we can find $n_i\in \w $ such that
$h_1(n_i),h_1(n_i+1)\in [f(i), f(i+1))$. Thus\footnote{The second inequality follows from
$h_1(n_i)\geq f(i)\geq i+1$.}
$$
h_1(n_i+1)=h(h_1(n_i))+1<f(i+1)\leq  f(h_1(n_i)),$$
 i.e.,
$$ \big\{h_1(n_i):i\in I\setminus\{\min I\}\big\}\subset\{k:h(k)<f(k)\}, $$
and hence $h$ does not dominate $f$. Summarizing the above, we
conclude that for any $p\in\IP$ and  $\IP$-name $\name{h}$ for an
element of $\zrost$, there is a stronger condition $q$ and
$f\in\zrost\cap V$ such that $q$ forces the set
$\{k:\name{h}(k)<f(k)\}$ to be infinite. This  means precisely  that
$\zrost\cap V$ is unbounded in $V[G]$ for any $\IP$-generic filter
$G$.

To prove  that $[\w]^\w\cap V$ is splitting, let us fix a $\IP$-name
$\name{Y}$ for an element of $[\w]^\w$,  $p\in\IP$,
and
pick a countable elementary submodel
$M$ of $H(\theta)$  such that $\IP,\name{Y},p\in M$.
 Suppose that  $f$ well-splits $M$ and  $q\leq p$ is
any  $(M, \IP)$-generic condition which forces    $f$ to
well-split $M[\Gamma]$. Then  $q$ forces the sets
$$ \name{I}_j:=\bigcup_{n\in E_j}[f(n),f(n+1))\cap\name{Y} $$
to be infinite for all $j\in 2$. Since the sets $\bigcup_{n\in
E_j}[f(n),f(n+1))$, $j\in 2$, are disjoint, infinite, and both are
in $V$, this completes our proof.
\end{proof}
Note that the converse to Lemma~\ref{obs_well_spl} does not hold.
\medskip

\begin{example} \label{referee}
There exists a ccc non-well-splitting poset which
preserves ground model reals as a splitting and unbounded family. Indeed, let $\mathbb C_{\w_1}$
be the poset obtained by adding $\w_1$-many Cohen reals with countable supports and $G$ a $\mathbb C_{\w_1}$-generic.
Let $\U\in V$ be an ultrafilter on $\w$  and $\U'=\{X\in\mathcal P(\w)^{V[G]} : \exists U\in \U(U\subset X) \}$
be the filter in $V[G]$ generated by $\U$ as its base. We claim that in $V[G]$ the poset $\IQ:=\mathcal M_{\U'}$
preserves ground model reals (i.e., $([\w]^\w)^{V[G]}$) as a splitting and unbounded family
in $V[G*H]$, where $H$ is $\IQ$-generic over $V[G]$.
Indeed,  by a rather standard argument similar to that in the proof of \cite[Theorem~11]{SchTal10},
one can check that $(\U')^{<\w}$ is $+$-Ramsey in $V[G]$ in the sense of \cite{Laf96}, i.e.,
for every sequence $\la X_n:n\in\w\ra$ in $((\U')^{<\w})^+$ there is a selector $\la a_n\in X_n:n\in\w\ra$
such that $\{a_n:n\in\w\}\in ((\U')^{<\w})^+$. By \cite[Prop.~1 and Th.~19]{ChoGuzHru16} we get that
$(\w^\w)^{V[G]}$ is non-meager in $(\w^\w)^{V[G*H]}$, and thus  $(\w^\w)^{V[G]}$ is unbounded in $V[G*H]$, and
it is also easy to see that $(\w^\w)^{V[G]}\cup\big\{\{\{n\}\times\w\}:n\in\w\big\}\subset\mathcal P(\w\times\w)^{V[G]}$
is a splitting family of subsets of $\w\times\w$ in $V[G*H]$.

On the other hand, $\mathbb C_{\w_1}*\IQ$ adds a pseudointersection to $\U$ and hence $([\w]^\w)^V$ is not a splitting family in $V[G*H]$, which together with Lemma~\ref{cs-pres} and Corollary~\ref{cohen}  implies that
$\IQ$ is not well-splitting in $V[G]$.
\hfill $\Box$
\end{example}

It is clear that each well-splitting poset is proper and an iteration of finitely many well-splitting posets is again well-splitting.
Next, we shall establish that being well-splitting is also preserved by countable support iterations.
The proof of the following lemma is similar to that of \cite[Lemma~2.8]{Abr10}, with some additional control
on the sequence $\la\name{p}_i:i\in\w\ra$.

Let us make a couple of standard conventions regarding our notation.
Whenever  $\la \IP_\alpha,\name{\IQ}_\alpha:\alpha<\delta \ra $
is an iterated forcing construction, we denote by $\IP_{[\alpha_0,\alpha_1)}$ a
$\IP_{\alpha_0}$-name for the quotient poset $\IP_{\alpha_1}/\IP_{\alpha_0}$, viewed naturally
as an iteration over the ordinals $\xi\in\alpha_1\setminus\alpha_0$. For a $\IP_{\alpha_0}$-generic $G$
and a $\IP_{\alpha_1}$-name $\tau$, where $\alpha_0\leq \alpha_1$, we denote by
$\tau^G$ the $\IP_{[\alpha_0,\alpha_1)}^G$-name in $V[G]$ obtained from $\tau$ by partially interpreting it
with $G$. This allows us to speak about, e.g., $\IP_{[\alpha_1,\alpha_2)}^G$ for
$\alpha_0\leq\alpha_1\leq\alpha_2\leq\delta$  and a $\IP_{\alpha_0}$-generic filter
$G$. For a poset $\IP$ we shall denote the standard $\IP$-name
for  $\IP$-generic filter  by $\Gamma_{\IP}$. We shall write $\Gamma_\alpha$ instead of $\Gamma_{\IP_\alpha}$
whenever we work with an iterated forcing construction which will be clear from the context.
Also, $\Gamma_{[\alpha_0,\alpha_1)}$ is a $\IP_{\alpha_1}$-name whose interpretation with respect
to a $\IP_{\alpha_0}$-generic filter $G$ is  $\Gamma_{\IP_{[\alpha_0,\alpha_1)}^G}$, which is an element of $ V[G]$.

\begin{lemma} \label{cs-pres}
If $\la \IP_\alpha,\name{\IQ}_\alpha:\alpha<\delta \ra \in M$ is a countable support iteration of
well-splitting (hence proper) posets, then   $\IP$ is also well-splitting.
\end{lemma}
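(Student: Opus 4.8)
The plan is to argue by induction on $\delta$, following the scheme of \cite[Lemma~2.8]{Abr10} for the preservation of properness and inserting the extra bookkeeping that keeps $f$ under control. Fix $M$, $p\in M\cap\IP_\delta$ and $f$ well-splitting $M$; we look for an $(M,\IP_\delta)$-generic $q\le p$ forcing $f$ to well-split $M[\Gamma_\delta]$. For $\delta=0$ there is nothing to do. For $\delta=\gamma+1$ (so $\gamma\in M$), I would first apply the inductive hypothesis to $\IP_\gamma$ to get an $(M,\IP_\gamma)$-generic $q_\gamma\le p\uhr\gamma$ forcing $f$ to well-split $M[\Gamma_\gamma]$, and then fix a $\IP_\gamma$-generic $G_\gamma\ni q_\gamma$: in $V[G_\gamma]$, $M[G_\gamma]$ is a countable elementary submodel of $H(\theta)^{V[G_\gamma]}$ containing $p(\gamma)^{G_\gamma}$ and the well-splitting poset $\name\IQ_\gamma^{G_\gamma}$, and $f$ still well-splits $M[G_\gamma]$, so the well-splitting of $\name\IQ_\gamma^{G_\gamma}$ yields an $(M[G_\gamma],\name\IQ_\gamma^{G_\gamma})$-generic $\name q(\gamma)\le p(\gamma)$ forcing $f$ to well-split $M[\Gamma_{\gamma+1}]$. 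A standard mixing argument makes $\name q(\gamma)$ a $\IP_\gamma$-name, and $q:=q_\gamma{}^\frown\name q(\gamma)$ is the required condition.

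Next, suppose $\delta$ is a limit ordinal. I would first reduce to the case $\sup(M\cap\delta)=\delta$ (so $\cf(\delta)=\w$), handling $\sup(M\cap\delta)<\delta$ exactly as in \cite[Lemma~2.8]{Abr10}, since then all supports and all names of the relevant members of $M$ are bounded below $\delta$, which reduces the problem to a shorter iteration. Fix an increasing sequence $\la\alpha_n:n\in\w\ra$ in $M\cap\delta$ with $\alpha_0=0$ and $\sup_n\alpha_n=\delta$; since $[\alpha_n,\alpha_{n+1})$ has order type $<\delta$, the inductive hypothesis shows that each $\IP_{[\alpha_n,\alpha_{n+1})}$ is well-splitting. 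Inside $M$ fix a surjection $\pi:\w\to\w$ attaining each value infinitely often and an enumeration $\la(\name Y_m,j_m,D_m):m\in\w\ra$ of all triples where $\name Y_m\in M$ is a $\IP_\delta$-name for an infinite subset of $\w$, $j_m\in2$, and $D_m\in M$ is dense in $\IP_\delta$.

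I would then recursively construct a fusion sequence $\la q_n,\name p_n:n\in\w\ra$ so that $q_n\in\IP_{\alpha_n}$ is $(M,\IP_{\alpha_n})$-generic and forces $f$ to well-split $M[\Gamma_{\alpha_n}]$; $q_{n+1}\uhr\alpha_n=q_n$; $q_n$ forces that $\name p_n$ is a condition of $\IP_{[\alpha_n,\delta)}$ lying in $M[\Gamma_{\alpha_n}]$, that $q_{n+1}\uhr[\alpha_n,\alpha_{n+1}){}^\frown\name p_{n+1}\le\name p_n$, and that $q_n{}^\frown\name p_n$ lies below a member of $D_i$ for every $i<n$; and, the key extra requirement, that for $m=\pi(n)$ the condition $q_{n+1}{}^\frown\name p_{n+1}$ forces the existence of some $\nu\in E_{j_m}$, larger than every such witness produced for $m$ at earlier stages, with $|[f(\nu),f(\nu+1))\cap\name Y_m|\ge2$. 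The step from $n$ to $n+1$ is made by working in $V[\Gamma_{\alpha_n}]$ below $q_n$: the well-splitting of $\IP_{[\alpha_n,\alpha_{n+1})}$, applied to the countable elementary submodel $M[\Gamma_{\alpha_n}]$, the condition $\name p_n\uhr[\alpha_n,\alpha_{n+1})$ and the function $f$, yields an $(M[\Gamma_{\alpha_n}],\IP_{[\alpha_n,\alpha_{n+1})})$-generic $q_{n+1}\uhr[\alpha_n,\alpha_{n+1})\le\name p_n\uhr[\alpha_n,\alpha_{n+1})$ which forces $f$ to well-split $M[\Gamma_{\alpha_{n+1}}]$; then in $V[\Gamma_{\alpha_{n+1}}]$ one picks $\name p_{n+1}\le\name p_n\uhr[\alpha_{n+1},\delta)$ inside $M[\Gamma_{\alpha_{n+1}}]$ meeting $D_n$ and forcing the required new witness for $\name Y_{\pi(n)}$.

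Securing this last choice of $\name p_{n+1}$ is what I expect to be the main obstacle, and it is exactly the point where the ``additional control on the sequence $\la\name p_i:i\in\w\ra$'' is needed: one strengthens the recursion by demanding, in addition, that $\name p_n$ already force $f$ to well-split the partial interpretations $\name Y_i^{\Gamma_{\alpha_n}}$ of the names activated before stage $n$. Since $f\notin M[\Gamma_{\alpha_{n+1}}]$ (otherwise $f$ could not well-split $M[\Gamma_{\alpha_{n+1}}]$) and the iterands are well-splitting, this invariant can be propagated, and it lets one find below $\name p_n\uhr[\alpha_{n+1},\delta)$ a condition in $M[\Gamma_{\alpha_{n+1}}]$ additionally forcing $f$ to well-split $\name Y_{\pi(n)}^{\Gamma_{\alpha_{n+1}}}$, so a suitable refinement forces the new witness and meets $D_n$. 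Finally I would set $q:=\bigcup_n q_n$, the condition of $\IP_\delta$ with $q\uhr\alpha_n=q_n$; the coherence clauses give $q\le q_{n+1}{}^\frown\name p_{n+1}$ for each $n$, so, as in \cite[Lemma~2.8]{Abr10}, $q$ is $(M,\IP_\delta)$-generic, and $q$ forces for every $m$ that $\{\nu\in E_{j_m}:|[f(\nu),f(\nu+1))\cap\name Y_m|\ge2\}$ is cofinal in $\w$, hence infinite. Since both values of $j$ occur among the $j_m$ and every infinite subset of $\w$ in $M[\Gamma_\delta]$ is $\name Y_m^{\Gamma_\delta}$ for some $m$, this says precisely that $q$ forces $f$ to well-split $M[\Gamma_\delta]$, completing the induction.
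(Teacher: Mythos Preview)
Your overall architecture---induction on $\delta$, the successor step, and a fusion argument at limits with bookkeeping of names and dense sets---matches the paper's. But the paragraph you flag as the ``main obstacle'' does not actually resolve it, and the proposed invariant does not do the work you claim.

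The difficulty is this: at stage $n$ you must find $\name p_{n+1}\in M[\Gamma_{\alpha_{n+1}}]$ below $\name p_n\uhr[\alpha_{n+1},\delta)$ which forces a \emph{specific} new witness $\nu\in E_{j_m}$ with $|[f(\nu),f(\nu+1))\cap\name Y_m|\ge 2$. Since $f\notin M[\Gamma_{\alpha_{n+1}}]$, the model cannot search for such $\nu$ on its own, and your invariant ``$\name p_n$ forces $f$ to well-split the already-activated $\name Y_i$'' gives you nothing for a freshly activated $m=\pi(n)$ and, even for old $m$, only tells you such $\nu$ exist generically---it does not produce a condition in $M[\Gamma_{\alpha_{n+1}}]$ deciding one. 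Your sentence ``it lets one find below $\name p_n\uhr[\alpha_{n+1},\delta)$ a condition in $M[\Gamma_{\alpha_{n+1}}]$ additionally forcing $f$ to well-split $\name Y_{\pi(n)}$'' is precisely the step that needs an argument, and the inductive hypothesis only covers $\IP_{[\alpha_n,\alpha_{n+1})}$, not the tail $\IP_{[\alpha_{n+1},\delta)}$.

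The paper's device to get past this is concrete and is what is meant by the ``additional control on $\la\name p_i:i\in\w\ra$'': before applying well-splitting of $\IP_{[\delta_i,\delta_{i+1})}$, it first builds in $M$ a decreasing sequence $\la\rho_m:m\in\w\ra$ of $\IP_{\delta_{i+1}}$-names for conditions in $\IP_{[\delta_{i+1},\delta)}$ below $\name p'_i\uhr[\delta_{i+1},\delta)$, with $\rho_m$ deciding the first $m$ elements of $\name Y_{i+1}$ to be some finite set $\nu_m$. Then $\name Z:=\bigcup_m\nu_m$ is a $\IP_{\delta_{i+1}}$-name in $M$ for an element of $[\w]^\w$. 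Now the well-splitting of $\IP_{[\delta_i,\delta_{i+1})}$ applied to $M[G_{\delta_i}]$ yields a generic $\pi$ forcing $f$ to well-split $M[\Gamma_{\delta_{i+1}}]$, hence in particular forcing the sets $\tau_j=\{n\in E_j:|[f(n),f(n+1))\cap\name Z|\ge 2\}$ to be infinite. In $V[G_{\delta_i}*H]$ one picks witnesses $n^j_{i+1}\in\tau_j$ above $i$, and then chooses $m$ large enough that $\rho_m$ forces $\name Z$ and $\name Y_{i+1}$ to agree below $f(\max_j n^j_{i+1}+1)$; the condition $\name p_{i+1}$ is then built from $\name p'_i\uhr\delta_i$, a reflection $\name s\in M[G_{\delta_i}]\cap H$ of this situation, and $\rho_m$. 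The point is that the tail name $\name Y_{i+1}$ is replaced by a name $\name Z$ living already at level $\delta_{i+1}$, to which the inductive well-splitting hypothesis genuinely applies; your invariant attempts to carry information about the full $\IP_\delta$-name instead, and that cannot be maintained without access to the well-splitting of the tail iteration, which is exactly what is being proved.
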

\begin{proof}
The proof is by induction on $\delta$. The successor case is clear.
So assume that $\delta$ is limit, $p\in\IP_\delta$,  and $M\ni \IP_\delta,p $
is a countable elementary submodel of $H(\theta)$ for a
sufficiently large $\theta$.  Pick an increasing sequence $\la\delta_i:i\in\w\ra$ cofinal in $\delta\cap M$,
with $\delta_i\in M$ for all $i\in\w$. Let also $\{D_i:i\in\w\}$ and $\{\name{Y}_i:i\in\w\}$ be an enumeration of all open dense subsets of $\IP_\delta$ and
all $\IP_\delta$-names for an infinite subset of
$\w$ which are elements of $M$, respectively. We can assume without loss of generality, that for
every $\IP_\delta$-name $\name{Y}\in M$ for an element of $[\w]^\w$ the set $\{i\in\w:\name{Y}=\name{Y}_i\}$ is infinite.
Suppose that $f$ well-splits $M$.
  By induction on $i\in\w $ we will define a condition
$q_i\in\IP_{\delta_i}$ and  $\IP_{\delta_i}$-names $\name{p}_i,
\name{n}^0_i,\name{n}^1_i$  such that
\begin{itemize}
\item[$(i)$] $\name{p}_i$ is a name for an element of $\IP_\delta$,  $q_0\forces_{\delta_0}\name{p}_0\leq \check{p}$,
and $q_{i+1}\forces_{\delta_{i+1}}\name{p}_{i+1}\leq\name{p}_i$;
\item[$(ii)$] $q_{i+1}\uhr\delta_i=q_i$;
\item[$(iii)$] $q_i$ is $(M,\IP_{\delta_i})$-generic;
\item[$(iv)$] $ \name{n}^0_i,\name{n}^1_i$ are $\IP_{\delta_i}$-names for natural numbers bigger than $i$; and
\item[$(v)$] $q_i$ forces over $\IP_{\delta_i}$ that ``$\name{p}_i\uhr\delta_i\in\Gamma_{\delta_i}$,
$\name{p}_i\in D_i\cap M$,
and $\name{p}_i$ forces over $\IP_\delta$  that $\name{n}^j_i\in E_j$ and $|[f(\name{n}^j_i), f(\name{n}^j_i+1))\cap\name{Y}_i|\geq 2 $
for all $j\in 2$''.
\end{itemize}
Suppose now that we have constructed objects as above and set $q=\bigcup_{i\in\w}q_i$.
Since $q_i=q\uhr\delta_i$ forces over $\IP_{\delta_i}$ that $\name{p}_i\uhr\delta_i\in\Gamma_{\delta_i}$
and $q_{i+1}\forces_{\delta_{i+1}}\name{p}_{i+1}\leq\name{p}_i$ for all $i$, a standard argument yields that
$q$ is $(M,\IP_\delta)$-generic and $q\forces_\delta\name{p}_i\in\Gamma_\delta$ for all $i\in\w$, see, e.g.,
the proof of \cite[Lemma~2.8]{Abr10} for details. Then $q$ forces that  $\tau_0:=\{n\in  E_0: |[f(n), f(n+1))\cap\name{Y}|\geq 2\}$ and $\tau_1:=\{n\in  E_1: |[f(n), f(n+1))\cap\name{Y}|\geq 2\}$ are infinite for any $\IP_\delta$-name
$\name{Y}$ for an infinite subset of $\w$: Given  $\IP_\delta$-generic $G\ni q$, note that $p_i:=\name{p}_i^G\in G$
for all $i$. Now $(v)$ implies $n^j_i\in\tau_j^G$ for all  $j\in 2$ and $i\in\w$ such that $\name{Y}=\name{Y}_i$, where $n^j_i=(\name{n}^j_i)^G$.

Returning now to the inductive construction, assume that
$q_i\in\IP_{\delta_i}$,  $\IP_{\delta_i}$-names $\name{p}_i, \name{n}^0_i,\name{n}^1_i$ satisfying $(i)$-$(v)$ have already been constructed.
Let $G_{\delta_i}$ be  $\IP_{\delta_i}$-generic containing $q_i$ and $p_i=\name{p}_i^{G_{\delta_i}}\in\IP_\delta\cap M$.
By  $(v)$ we know that $p_i\uhr\delta_i\in G_{\delta_i}$. In $V[G_{\delta_i}]$ let
 $p_i'\in M\cap D_{i+1}$ be such that $p_i'\leq p_i$ and $p_i'\uhr\delta_i\in G_{\delta_i}$.
By the maximality principle we get a $\IP_{\delta_i}$-name $\name{p}'_i$
for a condition in $\IP_\delta$ such that $q_i \forces_{\delta_i}$ ``$\name{p}'_i\leq \name{p}_i$,
 $\name{p}'_i\in M\cap D_{i+1}$, and $\name{p}'_i\uhr\delta_i\in\Gamma_{\delta_i}$''.

Given a  $\IP_{\delta_{i+1}}$-generic filter $R$,  construct in $V[R]$
a decreasing sequence  $\la r_m:m\in\w\ra\in M[R]$ of conditions in $\IP^R_{[\delta_{i+1},\delta)}$
below $(\name{p}'_i\uhr [\delta_{i+1},\delta))^R$
such that for some $a_m\in [\w]^m$ we have $r_m\forces_{\IP^R_{[\delta_{i+1},\delta)}} $ ``$a_m$ is the set of the first $m$ elements of $\name{Y}_{i+1}$''. By the maximality principle we get a sequence
$\la \rho_m:m\in\w\ra\in M$ of $\IP_{\delta_{i+1}}$-names for elements of
$\IP_{[\delta_{i+1},\delta)}$ such that
\begin{eqnarray*}\forces_{\delta_{i+1}} \big[\rho_{m+1}\leq\rho_m\wedge \exists \nu_m\in [\w]^m\: (\rho_m \forces_{\IP_{[\delta_{i+1},\delta)}} \\
  ``\nu_m\mbox{ is the set of the first $m$ many elements of  }\name{Y}_{i+1} \: '' )\big].
\end{eqnarray*}
In the notation used above, let
$\name{Z}$ be a $\IP_{\delta_{i+1}}$-name
for $\bigcup_{m\in\w}\nu_m$
and note that $\name{Z}$ is  a $\IP_{\delta_{i+1}}$-name for an infinite subset of $\w$.

Let again $G_{\delta_i}$ be  $\IP_{\delta_i}$-generic containing $q_i$ and $p'_i=(\name{p}'_i)^{G_{\delta_i}}\in\IP_\delta\cap M\cap D_{i+1}$.
It also follows from the above that $p'_i\uhr\delta_i\in G_{\delta_i}$. For a while we shall work in $V[G_{\delta_i}]$.
 Since
$\IP^{G_{\delta_i}}_{[\delta_i,\delta_{i+1})}$ is well-splitting in $V[G_{\delta_i}]$ by our inductive assumption,
there exists a $(M[G_{\delta_i}], \IP^{G_{\delta_i}}_{[\delta_i,\delta_{i+1})})$-generic condition $\pi\leq p_i'\uhr[\delta_i,\delta_{i+1})^{G_{\delta_i}}$
such that
$$ \pi\forces_{\IP^{G_{\delta_i}}_{[\delta_i,\delta_{i+1})}}
 \tau_j:=\{n\in  E_j: |[f(n), f(n+1))\cap\name{Z}^{G_{\delta_i}}|\geq 2\}$$
 is infinite for all $j\in 2. $
Let $H$ be $\IP^{G_{\delta_i}}_{[\delta_i,\delta_{i+1})}$-generic over $V[G_{\delta_i}]$ containing $\pi$
and $n^j_{i+1}\in\tau^H_{j}\setminus (i+2)$, where $j\in 2$. In $V[G_{\delta_i}*H]$ pick
$m\in\w$ such that
$$r_m:=\rho_m^{G_{\delta_i}*H} \forces_{\IP_{[\delta_{i+1},\delta)}^{G_{\delta_i}*H}} \name{Z}^{G_{\delta_i}*H}\cap f(\max_{j\in 2}(n^j_{i+1})+1)=
\name{Y}_{i+1}^{G_{\delta_i}*H}\cap f(\max_{j\in 2}(n^j_{i+1})+1).$$
In $M[G_{\delta_i}]$ pick a condition $s\in M[G]\cap H$ below $p_i'\uhr[\delta_i,\delta_{i+1})^{G_{\delta_i}}$
forcing the above properties of $n^j_{i+1}$, $\tau_j$, and $\rho_m$, where $j\in 2$.
By the maximality principle  we obtain $\IP_{[\delta_i,\delta_{i+1})}^{G_{\delta_i}}$-names
$\name{s}$ and $\rho$ in $M[G_{\delta_i}]$ for  element of $\IP_{[\delta_i,\delta_{i+1})}^{G_{\delta_i}}$
and $\IP_{[\delta_{i+1},\delta)}^{G_{\delta_i}}$, and names $\name{n}^j_{i+1}$ for natural numbers
such that
\begin{eqnarray}  \label{long}
 \nonumber \pi \forces_{\IP_{[\delta_i,\delta_{i+1}})^{G_{\delta_i}}} \name{s}\in M[G_{\delta_i}]\cap \Gamma^{G_{\delta_i}}_{[\delta_i,\delta_{i+1})}\wedge\name{s}\leq
\name{p}'\uhr [\delta_i,\delta_{i+1})^{G_{\delta_i}}\wedge\name{s}\forces_{\IP_{[\delta_i,\delta_{i+1}})^{G_{\delta_i}}}\\
 \rho\leq \name{p}'\uhr [\delta_{i+1},\delta)^{G_{\delta_i}}\wedge\rho\forces_{\IP_{[\delta_{i+1}},\delta)^{G_{\delta_i}}}\\ \nonumber
\forall j\in 2 \: |[f(\name{n}^j_{i+1}), f(\name{n}^j_{i+1}+1))\cap \name{Y}_{i+1}|\geq 2.
\end{eqnarray}
Using the maximality principle again, we can find
$\IP_{\delta_i}$-names for the objects appearing in
equation~(\ref{long}) such that $q_i$ forces this equation. We shall
use the same notation for these names. It remains to set
$q_{i+1}=q_i\bigvid \pi$ and
$\name{p}_{i+1}=\name{p}'_i\uhr\delta_i\bigvid\name{s}\bigvid\rho$
and note that they together with the names $\name{n}^j_{i+1}$, $j\in
2$, satisfy $(i)$-$(v)$ for $i+1$.
\end{proof}

By a \emph{Miller tree} we understand a subtree $T$ of $\w^{<\w}$
consisting of increasing  finite sequences such that the following
conditions are satisfied:
\begin{itemize}
\item
 Every $t \in T$ has an extension $s\in T$ which
is splitting in $T$ , i.e., there are more than one immediate
successors of $s$ in $T$;
\item If $s$ is splitting in $T$, then it
has infinitely many successors in $T$.
\end{itemize}
 The \emph{Miller forcing} is the
collection $\mathbb M$ of all Miller trees ordered by inclusion,
i.e., smaller trees carry more information about the generic. This
poset was introduced in \cite{Mil84}.
 For a Miller tree $T$ we
shall denote  the set of all splitting nodes of $T$ by $\spl(T)$.
$\spl(T)$ may be written in the form $\bigcup_{i\in\w}\spl_i(T)$,
where
$$ \spl_i(T)=\{t\in\spl(T)\: :\: |\{s\in\spl(T):s\subsetneq t\}|=i\}.$$
If $T_0, T_1\in\mathbb M$, then $T_1\leq_i T_0$ means $T_1\leq T_0$
and $\spl_i(T_1)=\spl_i(T_0)$. It is easy to check  that for any
sequence $\la T_i:i\in\w\ra\in\mathbb M^\w$, if $T_{i+1}\leq_i T_i$
for all $i$, then $\bigcap_{i\in\w}T_i\in\mathbb M$.

For a node $t$ in a Miller tree $T$ we denote by $T_t$ the set
$\{s\in T : s$ is compatible with $t\}$. It is clear that $T_t$ is
also a Miller tree.

\begin{lemma} \label{well-spl-miller}
The Miller forcing $\mathbb M$ is well-splitting.
\end{lemma}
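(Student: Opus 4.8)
The plan is a fusion argument, along the lines of standard preservation theorems for $\mathbb{M}$. Fix a countable elementary submodel $M\ni\mathbb{M}$ of $H(\theta)$, a Miller tree $p=T\in M$, and a strictly increasing $f$ well-splitting $M$. Enumerate as $\{D_i:i\in\w\}$ all open dense subsets of $\mathbb{M}$ lying in $M$, and as $\{\dot Y_i:i\in\w\}$ all $\mathbb{M}$-names in $M$ for infinite subsets of $\w$, arranging as usual that each such name occurs for infinitely many $i$. By recursion on $i$ we shall build Miller trees $T_i$ with $T_0=T$ and $T_{i+1}\leq_i T_i$ together with, for every $s\in\spl_i(T_i)$ and every immediate successor $s^\frown k$ of $s$ in $T_i$, a condition $r_{s,k}\in D_i\cap M$ with $\mathrm{stem}(r_{s,k})\spst s^\frown k$ and natural numbers $n^{s,k}_0\in E_0$, $n^{s,k}_1\in E_1$, both larger than $i$, such that $r_{s,k}$ forces $|[f(n^{s,k}_j),f(n^{s,k}_j+1))\cap\dot Y_i|\geq 2$ for both $j\in 2$; having these, $T_{i+1}$ is obtained from $T_i$ by replacing, above every $s^\frown k$ with $s\in\spl_i(T_i)$, the tree $(T_i)_{s^\frown k}$ by $r_{s,k}$, keeping all successors $k$. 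This leaves $\spl_i$ fixed, so $T_{i+1}\leq_i T_i$, and hence $q:=\bigcap_{i\in\w}T_i$ is a Miller tree with $q\leq p$ by the fusion property of $\mathbb{M}$ noted before the statement of the lemma.

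The crux is the construction of $r_{s,k}$ and $n^{s,k}_j$ for a fixed $i$, $s\in\spl_i(T_i)$ and successor $s^\frown k$. Observe first, by induction on $i$, that although $T_i$ itself need not lie in $M$, the subtree $(T_i)_{s^\frown k}$ does, because it was installed at stage $i-1$ as a subtree of some $r_{s',k'}\in M$ (for $i=0$ use $T_0=T\in M$). So we may work inside $M$: using $\dot Y_i\in M$, build in $M$ a decreasing sequence $\la\rho_m:m\in\w\ra$ of conditions below $(T_i)_{s^\frown k}$ with $\rho_m$ deciding the set $a_m$ of the first $m$ elements of $\dot Y_i$ (possible since for each $\rho_m$ the conditions deciding the first $m+1$ elements of $\dot Y_i$ form a dense set). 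Put $Z=\bigcup_{m\in\w}a_m$; as $a_m\sbst a_{m+1}$ and $|a_m|=m$, $Z$ is an infinite subset of $\w$ with $Z\in M$. Since $f$ well-splits $M$, we may pick $n^{s,k}_j\in E_j$ with $n^{s,k}_j>i$ and $|[f(n^{s,k}_j),f(n^{s,k}_j+1))\cap Z|\geq 2$ for $j\in 2$; set $N=\max(n^{s,k}_0,n^{s,k}_1)$ and choose $m$ so large that $\max a_m\geq f(N+1)$. Then $\rho_m$ forces $\dot Y_i\cap f(N+1)=a_m\cap f(N+1)=Z\cap f(N+1)$, and therefore forces $|[f(n^{s,k}_j),f(n^{s,k}_j+1))\cap\dot Y_i|\geq 2$ for $j\in 2$. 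Finally, by elementarity ($D_i,\rho_m\in M$, $D_i$ dense) choose $r_{s,k}\in D_i\cap M$ with $r_{s,k}\leq\rho_m$; then $\mathrm{stem}(r_{s,k})\spst s^\frown k$ automatically, since $(T_i)_{s^\frown k}$ has no splitting node strictly below $s^\frown k$. This passage from $\dot Y_i$ to the set $Z\in M$, so as to apply well-splitting of $M$ with respect to the external function $f$ and transfer the outcome back, is exactly the device of Lemma~\ref{cs-pres}, and together with keeping the right subtrees inside $M$ it is the only point of the proof that is not bookkeeping.

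It remains to see that $q$ is as required. Let $G$ be $\mathbb{M}$-generic with $q\in G$; the Miller real of $G$ is a branch through $q$ which for every $i$ passes through a unique node $s\in\spl_i(q)=\spl_i(T_i)$ and then through a unique immediate successor $s^\frown k$, and $(q)_{s^\frown k}\in G$. Since $(q)_{s^\frown k}\leq r_{s,k}\in D_i\cap M$, we get $G\cap D_i\cap M\neq\emptyset$; as the $D_i$ list all open dense subsets of $\mathbb{M}$ in $M$, the condition $q$ is $(M,\mathbb{M})$-generic. For the well-splitting clause it suffices (by a routine reduction) to consider $Y=\dot Y^G$ for $\dot Y\in M$ a name for an infinite subset of $\w$ and $j\in 2$. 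For each of the infinitely many $i$ with $\dot Y_i=\dot Y$, the Miller real passes through the corresponding $s\in\spl_i(q)$ and $s^\frown k$, so $r_{s,k}\in G$ forces $|[f(n^{s,k}_j),f(n^{s,k}_j+1))\cap\dot Y|\geq 2$ with $n^{s,k}_j\in E_j$, $n^{s,k}_j>i$; hence $\{n\in E_j:|[f(n),f(n+1))\cap Y|\geq 2\}$ contains an element bigger than $i$. As $i$ takes arbitrarily large values, this set is infinite, i.e. $q$ forces $f$ to well-split $M[\Gamma]$.
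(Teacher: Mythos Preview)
Your proof is correct and follows essentially the same fusion argument as the paper: at each level $i$, for every splitting node and successor you descend along a decreasing sequence deciding initial segments of $\dot Y_i$, collect the resulting set $Z\in M$, apply well-splitting of $M$ to $Z$ to find the witnesses $n^{s,k}_j$, and then strengthen into $D_i\cap M$. The only cosmetic difference is that the paper builds its decreasing sequence already inside $D_i$, whereas you enter $D_i$ at the end; your explicit inductive justification that $(T_i)_{s^\frown k}\in M$ is in fact a bit more careful than the paper's corresponding remark.
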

\begin{proof}
Let $N$ be an elementary submodel of $H(\theta)$ and $T\in\mathbb
M\cap N$. Let $\{\name{Y}_i:i\in\w\} $ be an enumeration of all
$\mathbb M$-names for infinite subsets of $\w$ which are elements of
$N$, in which every such name appears infinitely often. Let also
$\{D_i:i\in\w\}$ be an enumeration of all open dense subsets of
$\mathbb M$ which belong to $N$. Suppose that $f\in\w^\w$
well-splits $N$. We shall inductively construct a sequence $\la
T_i:i\in\w\ra$ such that $T_{i+1}\leq_i T_i$ and
$T_\infty=\bigcap_{i\in\w}T_i$ is as required. Set $T_0=T$ and
suppose that $T_i$ has already been constructed. Moreover, we shall
assume that $(T_i)_t\in N$ for all $t\in\spl_i(T_i)$.
 Let $\{t_j:j\in\w\}$ be a bijective enumeration of
$\spl_i(T_i)$. For every $j$ and $k\in\w$ such that $t_j\bigvid k\in
T_i$ fix a decreasing sequence $\la S^{i,j,k}_{n}:n\in\w\ra\in N$ of
elements of $D_i$ below $(T_i)_{t_j\vid k}$ such that each
$S^{i,j,k}_n$ decides some $a^{i,j,k}_n\in [\w]^n$ to be  the set of
the first $n$ many elements of $\name{Y_i}$. Thus
$Y^{i,j,k}:=\bigcup_{n\in\w}a^{i,j,k}_n\in N\cap [\w]^\w$, and hence
there are $E_p\ni m^{i,j,k}_{n,p}\geq i$ such that
$$ |[ f(m^{i,j,k}_{n,p}), f(m^{i,j,k}_{n,p}+1 ))\cap Y^{i,j,k}_n|\geq 2 $$
for all $p\in 2$. Let $n(i,j,k)$ be such that
$$Y^{i,j,k}\cap\max_{p\in 2} f(m^{i,j,k}_{n(i,j,k),p}+1 ) \subset  a^{i,j,k}_{n(i,j,k)}$$
and set
$$T_{i+1}=\bigcup\{ S^{i,j,k}_{n(i,j,k)}:j\in\w, t_j\bigvid k\in T_i\}.$$
This completes our inductive construction of the fusion sequence
$\la T_i:i\in\w\ra$. We claim that $T_\infty$ is as required. First
of all,  $T_\infty $ is $(N,\mathbb M)$-generic  because the
collection $\bigcup\{ S^{i,j,k}_{n(i,j,k)}:j\in\w, t_j\bigvid k\in
T_i\}$ is a subset of $D_i$ and predense below $T_{i+1}$ (and hence
also below $T_\infty$). Now fix a $\mathbb M$-name $\name{Y}\in N$ for an
element of $[\w]^\w$ and suppose to the contrary, that there exist
$i\in\w$, $p\in 2$, and $R\leq T_\infty $  that forces
$|[f(m),f(m+1))\cap\name{Y}|\leq 1$ for all $E_p\ni m\geq i$.
Enlarging $i$, if necessary, we may assume that
$\name{Y}=\name{Y}_i$. Passing to a stronger condition, if
necessary, we may assume that $R\leq (T_i)_{t_j\vid k}$ for some
$i,j\in\w$ and $k$ such that $t_j\vid k\in T_i$. But then $R\leq
S^{i,j,k}_{n(i,j,k)}$, and the latter condition forces
$$ |[ f(m^{i,j,k}_{n,p}), f(m^{i,j,k}_{n,p}+1 ))\cap Y^{i,j,k}_n|=|[ f(m^{i,j,k}_{n,p}), f(m^{i,j,k}_{n,p}+1 ))\cap \name{Y}|\geq 2,$$
which leads to a contradiction since $m^{i,j,k}_{n,p}$ has been
chosen to be above $i$. This contradiction completes our proof.
\end{proof}

Every filter $\F$ gives rise  to a natural forcing notion $\bM_\F$
introducing a generic subset $X\in [\w]^\w$ such that $X\subset^* F$
for all $F\in\F$ as follows: $\bM_\F$ consists of pairs $\la s,F\ra$
such that $s\in [\w]^{<\w}$, $F\in\F$, and $\max s<\min F$. A
condition $\la s,F\ra$ is stronger than $\la t,G\ra$ if $F\subset
G$, $s$ is an end-extension of $t$, and $s\setminus t\subset G$.
$\mathcal M_\F$ is usually called \emph{Mathias forcing associated
with} $\F$. In the proof of the next lemma we shall work with clopen subsets of
$\mathcal P(\w)$ of the form $\uparrow s=\{X\subset\w:s\subset X\}$, where $s\in [\w]^{<\w}$.

\begin{lemma} \label{hur_filters_ws}
Suppose that $\F$ is a Hurewicz filter. Then $\bM_\F$ is well-splitting.
\end{lemma}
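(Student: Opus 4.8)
The plan is to mimic the fusion argument used for Miller forcing (Lemma~\ref{well-spl-miller}), but now fusing along a descending sequence of Mathias conditions $\la s_i, F_i\ra$ where the side conditions $F_i$ shrink and the Hurewicz property of $\F$ is used to control finitely many ``guesses'' at a time. First I would fix a countable elementary submodel $M\ni\bM_\F$, a condition $\la s,F\ra\in M\cap\bM_\F$, an $f$ well-splitting $M$, and enumerations $\{D_i:i\in\w\}$ of the dense sets and $\{\name Y_i:i\in\w\}$ of the names for infinite subsets of $\w$ in $M$, with each name repeated infinitely often. I would build a sequence $\la\la s_i,F_i\ra:i\in\w\ra$ with $s_0=s$, $F_0=F$, $s_{i+1}\supseteq s_i$, $F_{i+1}\subseteq F_i$, $F_{i+1}\in M$, $\max s_{i+1}<\min F_{i+1}$, $s_{i+1}\sm s_i\subseteq F_i$, together with the bookkeeping that guarantees $(M,\bM_\F)$-genericity and the well-splitting requirement; the generic $X=\bigcup_i s_i$ and the final condition is $\la s_0,\bigcap_i F_i\ra$-style — except of course $\bigcap_i F_i$ need not lie in $\F$, which is exactly where the Hurewicz property must come in, via an almost-bounding/diagonalization argument as in \cite{ChoRepZdo15}.

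The core of each step: at stage $i$ I work inside $M$; for each way to extend $s_i$ by a finite set $t$ with $\max s_i<\min t<\min F_i$, and for each relevant dense set and name $\name Y_i$, I collect — using that $\bM_\F$ restricted below $\la s_i\cup t, F_i\ra$ still decides longer and longer initial segments of $\name Y_i$ — a set $Y^{i,t}\in M\cap[\w]^\w$ that lists potential elements of $\name Y_i$, then use that $f$ well-splits $M$ to find even/odd blocks $[f(m),f(m+1))$, $m\in E_j\cap(M)$ with $m\geq i$, each meeting $Y^{i,t}$ in $\geq 2$ points, choose a length $n(i,t)$ past the relevant $f$-values so the decided initial segment already contains those two points, and pass to the corresponding strengthened condition lying in $D_i$. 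Since at stage $i$ there are only finitely many ``active'' one-point extensions $t$ to worry about before committing, I can take a common lower bound of the finitely many resulting side-conditions inside $\F$; this is the step that replaces the ``finitely many splitting nodes $t_j$'' bookkeeping in the Miller proof.

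The main obstacle — and the place where the hypothesis that $\F$ is \emph{Hurewicz} (equivalently, $\bM_\F$ is almost $\w^\w$-bounding) is indispensable — is ensuring that the fusion terminates in a genuine condition, i.e. that a pseudointersection of the side conditions $F_i$ can be taken inside $\F$, or more precisely that the union of the finite stems remains below \emph{all} $F\in\F$ modulo finite. Equivalently one must arrange the construction so that the generic $X$ is captured by an actual $\bM_\F$-condition stronger than $\la s,F\ra$ and $(M,\bM_\F)$-generic; here I would invoke the characterization from \cite{ChoRepZdo15}: because $\F$ is Hurewicz, the Mathias forcing $\bM_\F$ is almost $\w^\w$-bounding, which furnishes exactly the kind of ``slalom-capturing'' $(M,\bM_\F)$-generic conditions needed, and lets me interleave the well-splitting demands $(v)$-type requirements with the genericity demands without the side conditions escaping $\F$. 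Once such a $q\leq\la s,F\ra$ is produced together with the recorded witnesses $m^{i,t}_{j}$, the verification that $q$ forces $f$ to well-split $M[\Gamma]$ is, as in Lemma~\ref{well-spl-miller}, a straightforward genericity-plus-density argument: any $R\leq q$ claiming $|[f(m),f(m+1))\cap\name Y_i|\leq 1$ for all large $m\in E_j$ can be refined below one of the committed conditions $S^{i,t}_{n(i,t)}$, which forces the opposite for the recorded $m^{i,t}_j\geq i$, a contradiction.
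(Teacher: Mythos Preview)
Your fusion approach has a genuine gap and misses the key simplification. Mathias forcing $\bM_\F$ has no fusion: a descending sequence $\la s_i,F_i\ra$ with growing stems need not have a lower bound, a Hurewicz filter need not be a P-filter, and your appeal to almost $\w^\w$-bounding does not repair this --- that notion bounds names for reals, it does not manufacture $(M,\bM_\F)$-generic conditions of a prescribed shape, and you never say concretely what the final condition $q$ actually is. Your remark that ``at stage $i$ there are only finitely many `active' one-point extensions $t$'' is also unjustified: below any $\la s_i,F_i\ra$ there are infinitely many one-point stem extensions, so the analogy with the finitely many level-$i$ splitting nodes of a Miller tree breaks down.

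The paper avoids fusion entirely by using the fact you overlooked: $\bM_\F$ is $\sigma$-centered, hence \emph{every} condition is already $(M,\bM_\F)$-generic. It therefore suffices to show that an arbitrary $\la s_0,F_0\ra\in M$ forces $f$ to well-split $M[\Gamma]$, and this is done by contradiction. Assuming some $\la s_1,F_1\ra\leq\la s_0,F_0\ra$ forces failure for a fixed $\name Y\in M$ and $j\in 2$, let $\name g$ enumerate $\name Y$ and for each $m$ form the open cover $\U_m$ of the \emph{space} $\F$ by basic clopen sets $\uparrow s$ for which $\la s_1\cup s,F_s\ra$ decides $\name g(m+1)$. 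The Hurewicz property (used once, topologically) yields finite $\V_m\subset\U_m$ whose unions $\gamma$-cover $\F$; the maxima of the decided values give a single $h\in M$, and iterating $h$ gives $h'\in M$. Since $f$ well-splits $M$, some large $n_2\in E_j$ traps two consecutive values $h'(m),h'(m+1)$ in $[f(n_2),f(n_2+1))$; because $F_1\in\F$ lies in $\bigcup\V_{h'(m)}$, one finds $s\subset F_1$ producing a condition compatible with $\la s_1,F_1\ra$ that forces two elements of $\name Y$ into $[f(n_2),f(n_2+1))$ --- a contradiction. Thus the Hurewicz hypothesis is used to build one ground-model function, not to salvage a fusion.
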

\begin{proof}
Suppose that $f$ well-splits $M\prec H(\theta)$, and $\F\in M$.
We shall prove that any $\la s_0, F_0\ra\in\bM_\F\cap M$
forces that $f$ well-splits $M[\Gamma]$. This suffices because all conditions in $\bM_\F$
are $(M,\bM_\F)$-generic. Suppose, contrary to our claim, that there exists
$\la s_1, F_1\ra\leq \la s_0, F_0\ra$
such that
\begin{eqnarray*}
\la s_1, F_1\ra\forces\exists \sigma \exists j \exists n_0\:\big(\sigma\in M\cap [\w]^\w\wedge j\in 2\wedge n_0\in\w\wedge\\
\wedge \forall n\in E_j\setminus  n_0\: (|[f(n), f(n+1))\cap\sigma|\leq 1)\big).
\end{eqnarray*}
Replacing $\la s_1, F_1\ra$ with a stronger condition, if necessary, we may fix $j\in 2$, $n_0\in\w$,
and a $\bM_\F$-name $\name{Y}\in M$ for an  infinite subsets of $\w$
such that
$$ \la s_1, F_1\ra\forces \forall n\in E_j\setminus  n_0\: (|[f(n), f(n+1))\cap\name{Y}|\leq 1). $$
Let  $\name{g}\in M$  be a name for a  function  such that $\name{g}(n)$ is forced to be the
$n$th element of $\name{Y}$.
For every $m\in\w$ let $\mathcal S_m$ be the set of those $ s\in
[F_0\setminus(\max s_1+1)]^{<\w}$ such that  there exist $F_s\in\F$
such that $\la  s_1\cup s , F_s \ra$ forces $ \name{g}(m+1)$ to be
equal to some $l_{s,m}\in\w $. It is clear that for every $F\in\F$ there
exists $s\in\mathcal S_m$ such that $s\subset F $. In other words, $
\U_m:=\{\uparrow s\colon   s \in \mathcal S_m\} $ is an open cover of
$\F$. Since $\F$ is Hurewicz,  there exists for every $m$ a finite
$\V_m\subset\U_m$ such that $\left\{\bigcup\V_m\colon m\in\w\right\}$ is a
$\gamma$-cover of $\F$. Let $\mathcal T_m\in [\mathcal S_m]^{<\w}$
be such that $\V_m=\{\uparrow s \colon  s \in \mathcal T_m\}$ and
$h(m)=\max\{ l_{s,m}\colon s\in\mathcal T_m\}+1$.
By elementarity, we can in addition assume that
$\la \U_m,\V_m, \mathcal S_m, \mathcal T_m:m\in\w\ra\in M$ as well as $h\in M$.

Set $h'(0)=h(0)$ and $h'(m+1)=h(h'(m))$ for all $m\in\w$.
Let $m_0$ be such that for every $m\geq m_0$ there exists $s\in\mathcal S_m\cap\mathcal P(F_1)$.
Set $n_1=\max\{n_0,h'(m_0)\}$.
Since $f$ well-splits $M$, the set
$I_j:=\{n\in E_j:|[f(n), f(n+1))\cap \mathrm{range}(h')|\geq 2\}$ is infinite. In particular,
it contains some $n_2>n_1$.  Thus
there exists $m\in\w$ such that
$$ f(n_2)\leq h'(m)<h'(m+1)=h(h'(m))<f(n_2). $$
 Now,  $ f$ is strictly increasing,  hence by the definition of $n_1$ we have that
$m>m_0$, and therefore there exist $s\in \mathcal S_{h'(m)}\cap\mathcal P(F_1)$.
Thus there exists
 $F_s\in\F$ such that
$$\la s_1\cup s, F_s\ra\forces\name{g}(h'(m)+1)=l_{s,h'(m)}<h(h'(m))<f(n_2).$$
Also,  $\la s_1\cup s, F_s\ra\forces\name{g}(h'(m))\geq h'(m)\geq f(n_2)$.
It follows from the above that
$\la s_1\cup s, F_s\ra$ forces that
$[f(n_2), f(n_2)+1)$ contains at least two elements of $\name{Y}$, namely
the $h'(m)$-th and $h'(m)+1$-st. On the other hand, $\la s_1\cup s, F_s\ra$
 is compatible with $\la s_1, F_1\ra$ because $s\subset F_1$ and $\max s_1<\min s$, $n_2>n_0$, $n_2\in E_j$,
and $\la s_1, F_1\ra$ forces $|[f(n), f(n+1))\cap\name{Y}|\leq 1$ for all $n\in  E_j\setminus n_0$.
In this way two compatible conditions $\la s_1, F_1\ra$ and $\la s_1\cup s, F\ra$ force
contradictory facts, which is impossible. This completes our proof.
\end{proof}

Let us mention that there is another property of posets $\mathcal M_\F$ for Hurewicz filters $\F$ which is preserved by \emph{finite} support iterations
and which guarantees that the ground model reals remain splitting and unbounded, see
\cite[Prop. 84]{BGHR??}.

\begin{corollary} \label{cohen}
The Cohen forcing is well-splitting.
\end{corollary}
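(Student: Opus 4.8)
The plan is to verify the definition directly, presenting Cohen forcing $\mathbb C$ as the poset of all finite partial functions $\w\to 2$ ordered by reverse inclusion. Since $\mathbb C$ is countable, for every countable elementary submodel $M$ of $H(\theta)$ with $\mathbb C\in M$ we have $\mathbb C\sbst M$; in particular every open dense $D\in M$ satisfies $D\sbst M$, so \emph{every} condition is $(M,\mathbb C)$-generic. Thus, given $p\in\mathbb C$ and a function $f$ well-splitting $M$, it suffices to show that $p$ already forces $f$ to well-split $M[\Gamma]$.

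Suppose this fails. Then there are a $\mathbb C$-name $\name Y\in M$ for an element of $[\w]^\w$, some $j\in 2$, and a condition $r\leq p$ forcing the set $\{n\in E_j:|[f(n),f(n+1))\cap\name Y|\geq 2\}$ to be finite; strengthening $r$ we may assume it decides some $n_0\in\w$ with $r\forces\forall n\in E_j\sm n_0\,(|[f(n),f(n+1))\cap\name Y|\leq 1)$. The crucial point is that $r\in M$, because $\mathbb C\sbst M$. Let $\name g\in M$ be a name for the increasing enumeration of $\name Y$. Working inside $M$, and using that $\name Y$ is forced to be infinite, I would build a descending sequence $r=r_0\geq r_1\geq r_2\geq\cdots$ of conditions such that $r_m$ decides $\name g\uhr m$, say $r_m\forces\name g\uhr m=a_m$ with $a_m\in\w^m$. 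Putting $Z=\bigcup_{m\in\w}\range(a_m)$ gives an infinite element of $M\cap[\w]^\w$.

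Now I would apply the hypothesis that $f$ well-splits $M$ to the set $Z$: the collection $\{n\in E_j:|[f(n),f(n+1))\cap Z|\geq 2\}$ is infinite, so it contains some $n^\ast\geq n_0$. Choose $z_1<z_2$ in $Z\cap[f(n^\ast),f(n^\ast+1))$ and $m\in\w$ large enough that $z_1,z_2\in\range(a_m)$. Then $r_m\forces\{z_1,z_2\}\sbst\name Y$, hence $r_m\forces|[f(n^\ast),f(n^\ast+1))\cap\name Y|\geq 2$; on the other hand $r_m\leq r$ and $n^\ast\in E_j\sm n_0$ yield $r_m\forces|[f(n^\ast),f(n^\ast+1))\cap\name Y|\leq 1$, a contradiction. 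Hence $p$ forces $f$ to well-split $M[\Gamma]$, so $\mathbb C$ is well-splitting.

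I do not expect a serious obstacle here; the only thing worth observing is that, unlike in Lemma~\ref{hur_filters_ws}, countability of $\mathbb C$ forces $\mathbb C\sbst M$, which is precisely what allows one to run the construction \emph{below the offending condition $r$ and inside $M$} without any covering hypothesis on the poset. Alternatively, one could derive the corollary from Lemma~\ref{hur_filters_ws} itself: $\mathbb C$ is forcing equivalent to the Mathias forcing $\bM_{\mathrm{Fr}}$ of the Fr\'echet filter $\mathrm{Fr}=\{A\sbst\w:\w\sm A\in\skin\}$ --- as $\bM_{\mathrm{Fr}}$ is a countable atomless poset --- and $\mathrm{Fr}$, being a countable subspace of $\mathcal P(\w)$, is $\sigma$-compact, hence Hurewicz, so that $\bM_{\mathrm{Fr}}$ is well-splitting; this route additionally requires only the routine remark that being well-splitting depends solely on the Boolean completion of a poset.
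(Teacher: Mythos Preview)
Your direct argument is correct. The key observation --- that $\mathbb C\sbst M$ forces the offending condition $r$ itself to lie in $M$, so the decreasing sequence $\la r_m:m\in\w\ra$ (and hence $Z$) can be built inside $M$ --- is exactly what makes the verification go through without any additional hypothesis on the poset. One small point worth making explicit: to ensure $Z\in M$ you need the whole sequence $\la r_m:m\in\w\ra$ (not just each term) to be in $M$; this is fine because, with $r,\name g\in M$ and a fixed enumeration of $\mathbb C\in M$, the sequence is definable from parameters in $M$.

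The paper, however, does not argue directly: its entire proof is precisely your alternative route. It observes that Cohen forcing, being a countable atomless poset, is (forcing) isomorphic to $\bM_{\hot Fr}$ for the Fr\'echet filter $\hot Fr$, and that $\hot Fr$ is Hurewicz, so Lemma~\ref{hur_filters_ws} applies. Your direct approach is more self-contained and avoids invoking Lemma~\ref{hur_filters_ws}, but in effect it reproduces a stripped-down instance of that lemma's proof (the descending sequence $\la r_m\ra$ plays the role of the sequence $\la S^{i,j,k}_n\ra$ or $\la\rho_m\ra$ appearing there). The paper's route is shorter once Lemma~\ref{hur_filters_ws} is in hand; yours makes the corollary independent of it and sidesteps the remark about invariance under forcing equivalence.
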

\begin{proof}
The Cohen forcing is isomorphic to any countable atomless poset, in
particular to $\bM_{\hot Fr}$, where $\hot Fr$ is the Fr\'echet filter
consisting of all cofinite subsets of $\w$. It remains to note that
$\hot Fr$ is Hurewicz.
\end{proof}

Recall that a poset $\IP$ is \emph{$\w^\w$-bounding} if $\w^\w\cap V$ is
dominating in $V^{\IP}$.

\begin{lemma} \label{w_w}
Every proper $\w^\w$-bounding poset $\IP$ is well-splitting.
\end{lemma}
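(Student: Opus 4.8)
The plan is to verify the defining condition of well-splitting directly, exploiting that an $\w^\w$-bounding proper poset adds no unbounded reals and hence, modulo the generic filter, every name for an element of $[\w]^\w$ is ``captured'' by a ground-model set. So fix a countable elementary submodel $M\prec H(\theta)$ with $\IP\in M$, a condition $p\in M\cap\IP$, and a strictly increasing $f\in\w^\w$ well-splitting $M$. Since $\IP$ is proper, choose an $(M,\IP)$-generic condition $q\leq p$; the only thing to check is that $q$ forces $f$ to well-split $M[\Gamma]$. By elementarity it suffices to show: for every $\IP$-name $\name Y\in M$ for an element of $[\w]^\w$, every $j\in 2$, and every $n_0\in\w$, no extension of $q$ forces $|[f(n),f(n+1))\cap\name Y|\le 1$ for all $n\in E_j\setminus n_0$.

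The key step is the following. Working inside $M$, let $\name g\in M$ be a $\IP$-name for the increasing enumeration of $\name Y$. Since $\IP$ is $\w^\w$-bounding, there is (in $M$, by elementarity) a ground-model function $h\in M\cap\w^\w$ and, for each $n$, a maximal antichain $A_n\subset\IP$, $A_n\in M$, such that every $r\in A_n$ decides $\name g(n)$ to a value $<h(n)$; we may take $h$ strictly increasing. Define $h'\in M$ by $h'(0)=h(0)$, $h'(m+1)=h(h'(m))$. Because $f$ well-splits $M$ and $\mathrm{range}(h')\in M\cap[\w]^\w$, the set $I_j=\{n\in E_j:|[f(n),f(n+1))\cap\mathrm{range}(h')|\ge 2\}$ is infinite, so pick $n_2\in I_j$ with $n_2\ge n_0$, together with $m$ such that $f(n_2)\le h'(m)<h'(m+1)=h(h'(m))<f(n_2+1)$. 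Now suppose some $r\le q$ forced $|[f(n_2),f(n_2+1))\cap\name Y|\le 1$. Strengthening $r$ to decide $\name g(h'(m))$ and $\name g(h'(m)+1)$, and using that $r$ is compatible with $M$-generic filters so these decisions are made below members of $A_{h'(m)}$ and $A_{h'(m)+1}$ inside $M$, we get on one hand $\name g(h'(m))\ge h'(m)\ge f(n_2)$ (the enumeration is increasing, so its $k$-th value is $\ge k$) and on the other hand $\name g(h'(m)+1)<h(h'(m))=h'(m+1)<f(n_2+1)$; hence both the $h'(m)$-th and $(h'(m)+1)$-st elements of $\name Y$ lie in $[f(n_2),f(n_2+1))$, contradicting $|[f(n_2),f(n_2+1))\cap\name Y|\le 1$.

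Finally one assembles these pieces: for a fixed $\name Y\in M$ and $j\in 2$, the argument above shows that the set of $n\in E_j$ for which $q$ does not force $|[f(n),f(n+1))\cap\name Y|\le 1$ is cofinal, which (after passing through a generic $G\ni q$ and using that $M[\Gamma]$ interprets $\name Y$ as a genuine infinite subset of $\w$) gives that $q$ forces $\{n\in E_j:|[f(n),f(n+1))\cap\name Y|\ge 2\}$ to be infinite for both $j$. Since every element of $M[\Gamma]\cap[\w]^\w$ has a name in $M$, $q$ forces $f$ to well-split $M[\Gamma]$, as required.

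The step I expect to be the main obstacle is the bookkeeping in the previous paragraph: one must extract the bounding function $h$ and the antichains $A_n$ \emph{inside $M$} and then argue that an arbitrary extension $r\le q$, which need not lie in $M$, nevertheless decides $\name g(h'(m))$ compatibly with some element of $A_{h'(m)}\cap M$ — this is exactly where $(M,\IP)$-genericity of $q$ is used, and the inequalities relating $f(n_2)$, $h'(m)$, $h(h'(m))$, and $f(n_2+1)$ have to be lined up so that the two forced values of $\name g$ straddle a single interval $[f(n_2),f(n_2+1))$. Everything else — choosing the proper $(M,\IP)$-generic $q$, the reduction via elementarity to a single name $\name Y$, and the observation that $\w^\w$-bounding yields the uniform bound $h$ — is routine.
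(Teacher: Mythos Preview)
Your overall strategy matches the paper's, but two steps fail as written.

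First, the extraction of the bound $h$. You claim there exist $h\in M\cap\w^\w$ and maximal antichains $A_n\in M$ such that \emph{every} $r\in A_n$ decides $\name g(n)$ to a value $<h(n)$; this amounts to $1_{\IP}\forces\forall n\,(\name g(n)<h(n))$, which is strictly stronger than $\w^\w$-bounding and can fail (take $\IP$ a large antichain whose elements force $\name g$ to range over all of $\w^\w\cap V$). Defining $h(n)$ as the sup of the values decided along $A_n\cap M$ does give a finite value, but then $h\notin M$, so your recursion $h'$ is no longer in $M$ and $f$ need not well-split $\mathrm{range}(h')$. The paper instead argues directly that, since the name for a dominating ground-model function lies in $M$ and $q$ is $(M,\IP)$-generic, $q$ forces that name to evaluate to an element of $M$; hence there exist $h\in M\cap\zrost$ and $k_0\in\w$ with $q\forces\forall k\ge k_0\,(\name g(k)<h(k))$. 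The bound is tied to $q$, not to $1_{\IP}$, and no antichain bookkeeping is needed.

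Second, your recursion is one iterate of $h$ too short. From $q\forces\name g(k)<h(k)$ you get $\name g(h'(m)+1)<h(h'(m)+1)$, not $\name g(h'(m)+1)<h(h'(m))$; since $h$ is increasing, your displayed inequality is unjustified, and the $(h'(m)+1)$-st element of $\name Y$ may well lie beyond $f(n_2+1)$. The paper sets $h_1(0)=0$, $h_1(n+1)=h(h(h_1(n)))+1$ and, for $i$ with $h_1(n_i),h_1(n_i+1)\in[f(i),f(i+1))$, exhibits the two elements $g(h_1(n_i))$ and $g(h(h_1(n_i)))$ of $Y$ (not consecutive ones) via
\[
f(i)\le h_1(n_i)\le g(h_1(n_i))<h(h_1(n_i))\le g\bigl(h(h_1(n_i))\bigr)<h\bigl(h(h_1(n_i))\bigr)<h_1(n_i+1)<f(i+1).
\]
With these two corrections your argument becomes exactly the paper's.
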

\begin{proof}
Let us fix a $\IP$-name $\name{Y}$ for an element of $[\w]^\w$,
$p\in\IP$,
and
pick a countable elementary submodel
$M$ of $H(\theta)$  such that $\IP,\name{Y},p\in M$.
 Suppose that  $f$
well-splits $M$ and $q\leq p$ is any  $(M, \IP)$-generic condition.
  Let  $\name{g}\in  M$ be a name for the function in
$\zrost$ which is the increasing enumeration of $\name{Y}$.
Since $\IP$
is  $\w^\w$-bounding and $q$ is $(M,\IP)$-generic, there exist
 $k_0\in\w$ and $h\in
M\cap\zrost$  such that $q\forces$ ``$\name{g}(k)< h(k)$ for all $k\geq k_0$''.
  Let $h_1\in M$ be  the following function: $h_1(0)=0$,
$h_1(n+1)=h(h(h_1(n)))+1$ for all $n\in\w$. Let $G$ be $\IP$-generic
containing $q$ and $Y,g$ be the evaluations of $\name{Y},\name{g}$
with respect to $G$, respectively.
 It
follows from the above that the set
$$I:=\{i\in E_0: |[f(i),f(i+1))\cap\mathrm{range}(h_1)|\geq 2\} $$
is  infinite.
For every $i\in I$ we can find $n_i\in \w $ such that
$h_1(n_i),h_1(n_i+1)\in [f(i), f(i+1))$. Thus if $i\geq k_0$ then we have
\begin{eqnarray*}
 f(i)\leq h_1(n_i)\leq g(h_1(n_i))< h(h_1(n_i))\leq g (h(h_1(n_i)))<\\
< h(h(h_1(n_i)))=h_1(n_i+1)<f(i),
\end{eqnarray*}
and hence
$ |[f(i),f(i+1))\cap Y |\geq 2 $ because $g(h_1(n_i)),  g (h(h_1(n_i)))$
belong to the latter intersection. Therefore
in $V[G]$ we have $I\subset \{i\in E_0: |[f(i),f(i+1))\cap Y |\geq 2\} $.
Since $G\ni q$ was chosen arbitrarily, we can conclude that
$q$ forces the set
$$ \{n\in E_0: |[f(n),f(n+1))\cap \name{Y}|\geq 2\} $$
to be infinite. Analogously we can get that
$q$ forces also the set
$$ \{n\in E_1: |[f(n),f(n+1))\cap \name{Y}|\geq 2\} $$
to be infinite, which completes our proof.
\end{proof}

Summarizing the results proved in  this section we get the following

\begin{theorem}\label{main}
The class of all well-splitting posets preserves ground model reals splitting and unbounded, is closed
under  countable support iterations, and includes $\w^\w$-bounding, Cohen, Miller, and
Mathias forcing associated to filters with the Hurewicz covering properties.
\end{theorem}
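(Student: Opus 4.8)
The plan is simply to assemble the statements already isolated in this section, since each clause of the theorem has been established above as a separate lemma or corollary. First I would recall that the preservation assertion is exactly Lemma~\ref{obs_well_spl}: for a well-splitting poset $\IP$ and any $\IP$-generic $G$, that lemma shows $V\cap[\w]^\w$ remains splitting and $V\cap\w^\w$ remains unbounded in $V[G]$. In fact the notion of well-splitting was set up precisely so that this preservation becomes essentially automatic, through the disjoint ground-model blocks $\bigcup_{n\in E_j}[f(n),f(n+1))$ and the ``exponential'' auxiliary name $\name{h}_1$ used in that proof.

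Second, closure under countable support iterations is Lemma~\ref{cs-pres}, whose proof runs by induction on the length $\delta$ of the iteration: the successor step reduces to two-step composition, and the limit step is the fusion argument producing a sequence $\la q_i,\name{p}_i,\name{n}^0_i,\name{n}^1_i:i\in\w\ra$ meeting conditions $(i)$--$(v)$, in the spirit of \cite[Lemma~2.8]{Abr10} but with the extra control needed to diagonalize against all names $\name{Y}_i$ for infinite subsets of $\w$ lying in the countable model $M$.

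Third, for the concrete examples I would cite: every proper $\w^\w$-bounding poset is well-splitting by Lemma~\ref{w_w} (the witness $f$ that still well-splits in the extension is obtained from a ground-model dominating function $h$ composed with itself along the blocks of $f$); the Cohen forcing is well-splitting by Corollary~\ref{cohen}, identifying it with $\bM_{\hot Fr}$ and invoking Lemma~\ref{hur_filters_ws} together with the trivial fact that the Fr\'echet filter is Hurewicz; the Miller forcing is well-splitting by Lemma~\ref{well-spl-miller}, via a fusion along the relations $\leq_i$; and the Mathias forcing $\bM_\F$ of any Hurewicz filter $\F$ is well-splitting by Lemma~\ref{hur_filters_ws}, where the Hurewicz property of $\F$ is used to collapse the open covers $\U_m$ of $\F$ (arising from the possible values of the $m$-th element of the Mathias generic) into a single function $h$ whose iterate then interacts correctly with any $f$ well-splitting the model.

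Since every clause of the statement is thereby reduced to an already-proved result, there is no further work: the theorem is just the bookkeeping observation that the list of examples in its formulation matches exactly the conclusions of Lemmas~\ref{obs_well_spl}, \ref{cs-pres}, \ref{well-spl-miller}, \ref{hur_filters_ws}, \ref{w_w} and Corollary~\ref{cohen}. If one nevertheless wants to single out the main obstacle among the ingredients, it is Lemma~\ref{cs-pres}, the countable support iteration theorem; the remaining lemmas are direct fusion or diagonalization constructions, whereas there one must simultaneously arrange genericity over $M$, membership of the $\name{p}_i$ in the prescribed dense sets, and the well-splitting demands at every coordinate of the iteration.
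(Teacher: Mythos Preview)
Your proposal is correct and matches the paper's approach exactly: the theorem is stated in the paper as an explicit summary (``Summarizing the results proved in this section we get the following'') with no separate proof, and you have correctly identified that each clause is covered by Lemmas~\ref{obs_well_spl}, \ref{cs-pres}, \ref{well-spl-miller}, \ref{hur_filters_ws}, \ref{w_w} and Corollary~\ref{cohen}. Your parenthetical glosses on the content of those lemmas are accurate enough for a summary, though in Lemma~\ref{hur_filters_ws} the covers $\U_m$ arise from deciding the $(m{+}1)$-st element of the name $\name{Y}$ rather than of the Mathias generic itself.
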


\end{document}